\DeclareMathOperator{\Hom}{Hom}
\DeclareMathOperator{\Ext}{Ext}
\DeclareMathOperator{\DD}{D}
\newcommand{\D}{\mathscr{D}}
\newcommand{\F}{\mathscr{F}}
\newcommand{\dR}{\mathrm{dR}}
\DeclareMathOperator{\grHom}{\sideset{^*}{}\Hom}
\DeclareMathOperator{\grD}{\sideset{^*}{}\DD}
\newcommand{\fm}{\mathfrak{m}}
\theoremstyle{plain}
\newtheorem{thm}{Theorem}[section]
\newtheorem*{thm*}{Theorem}
\newtheorem{bigthm}{Theorem}
\newtheorem{prop}[thm]{Proposition}
\newtheorem{lem}[thm]{Lemma}
\newtheorem{cor}[thm]{Corollary}
\theoremstyle{definition}
\newtheorem{definition}[thm]{Definition}
\theoremstyle{remark}
\newtheorem{remark}[thm]{Remark}
\newtheorem{example}[thm]{Example}
\newtheorem{question}[thm]{Question}
\numberwithin{equation}{thm}
\begin{document}
\title{A prime-characteristic analogue of a theorem of Hartshorne-Polini}
\author{Nicholas Switala \and Wenliang Zhang}
\address{Department of Mathematics, Statistics, and Computer Science \\ University of Illinois at Chicago \\ 322 SEO (M/C 249) \\ 851 S. Morgan Street \\ Chicago, IL 60607}
\email{nswitala@uic.edu, wlzhang@uic.edu}
\thanks{The first author gratefully acknowledges NSF support through grant DMS-1604503. The second author is partially supported by the NSF through grant DMS-1606414 and CAREER grant DMS-1752081.}
\subjclass[2010]{Primary 13A35; secondary 13A02, 13D45}
\keywords{Matlis duality, local cohomology, Frobenius modules, $\F$-modules}

\begin{abstract}
Let $R$ be an $F$-finite Noetherian regular local ring containing its algebraically closed residue field $k$ of positive characteristic, and let $M$ be an $\F$-finite $\F$-module over $R$ in the sense of Lyubeznik (for example, any local cohomology module of $R$). We prove that the $\mathbb{F}_p$-dimension of the space of $\F$-module morphisms $M \rightarrow E(R/\fm)$ (where $\fm$ is any maximal ideal of $R$ and $E(R/\fm)$ is the $R$-injective hull of $R/\fm$) is equal to the $k$-dimension of the Frobenius stable part of $\Hom_R(M,E(R/\fm))$. This is a positive-characteristic analogue of a recent result of Hartshorne and Polini for holonomic $\D$-modules in characteristic zero. \end{abstract}

\maketitle

\section{Introduction}\label{intro}
In the study of finiteness properties of local cohomology modules there has been an emerging theme: the parallel between (holonomic) $\D$-modules in characteristic zero and ($\F$-finite) $\F$-modules in characteristic $p>0$ ({\it cf.} \cite{ZhangInjDim},\cite{SwitalaZhangInjDim}). This paper continues the line of research under the same theme: we prove $\F$-module analogues of results obtained by Hartshorne and Polini in \cite{HartshornePolini} and by the authors in \cite{SwitalaZhangGradedDual} for $\D$-modules over formal power series or polynomial rings. The theory of ($\F$-finite) $\F$-modules will be reviewed in the next section. We begin by recalling the results for holonomic $\D$-modules in characteristic zero.

\begin{thm}\label{top dR theorems}
\begin{enumerate}[(a)]
\item \cite[Corollary 5.2]{HartshornePolini},\cite[Theorem 5.1]{SwitalaThesis} Let $R = k[[x_1, \ldots, x_n]]$ where $k$ is a field of characteristic zero, let $\fm \subseteq R$ be the maximal ideal, and let $\D = \D(R,k)$ be the ring of $k$-linear differential operators on $R$. Denote by $E$ the $R$-module $H^n_{\fm}(R)$, which is an $R$-injective hull of $k$. If $M$ is a holonomic $\D$-module, then
\begin{align*}
\dim_k H^n_{\dR}(M) = \dim_k H^0_{\dR}(D(M)) &= \dim_k \Hom_{\D}(M, E)\\
&= \max\{t \in \mathbb{N} \mid \exists\ {\rm a\ } \D{\rm -module\ surjection}\ M \rightarrow E^t\},
\end{align*}
where $H^i_{\dR}(-)$ denotes the de Rham cohomology of a $\D$-module, and $D(-) = \Hom_R(-,E)$ is the \emph{Matlis dual} functor.
\item \cite[Theorem 5.3]{SwitalaZhangGradedDual} Let $R = k[x_1, \ldots, x_n]$ where $k$ is a field of characteristic zero, let $\fm \subseteq R$ be the irrelevant maximal ideal, and let $\D = \D(R,k)$ be the ring of $k$-linear differential operators on $R$. Denote by $E$ the $R$-module $H^n_{\fm}(R)$, which (with the correct choice of grading) is a graded $R$-injective hull of $k$. If $M$ is a finitely generated graded $\D$-module with finite-dimensional de Rham cohomology spaces (for example, a graded holonomic $\D$-module), then
\begin{align*}
\dim_k H^n_{\dR}(M) = \dim_k H^0_{\dR}(\grD(M)) &= \dim_k \Hom_{\D}(M, E)\\
&= \max\{t \in \mathbb{N} \mid \exists\ {\rm a\ } \D{\rm -module\ surjection}\ M \rightarrow E^t\},
\end{align*}
where $\grD(-) = \grHom_R(-,E)$ is the \emph{graded Matlis dual} functor.
\end{enumerate}
\end{thm}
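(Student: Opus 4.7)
The theorem comprises four equalities, which I would establish as three links in a chain, handling the formal case (a) and the graded case (b) uniformly.

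First, the identity $\Hom_{\D}(M,E) = H^0_{\dR}(D(M))$ is formal once one unpacks the canonical $\D$-module structure on $D(M) = \Hom_R(M,E)$: the action is $(\partial_i \varphi)(m) = \partial_i(\varphi(m)) - \varphi(\partial_i m)$, so $\varphi$ is killed by every $\partial_i$ precisely when $\varphi$ commutes with every $\partial_i$, i.e., precisely when $\varphi$ is $\D$-linear. The graded case is identical with $\grHom$ in place of $\Hom$.

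Next, to prove $\dim_k H^n_{\dR}(M) = \dim_k H^0_{\dR}(D(M))$, I would invoke $\D$-module duality. For holonomic $M$, Verdier duality furnishes a perfect pairing $H^i_{\dR}(M) \otimes_k H^{n-i}_{\dR}(\mathbb{D}M) \to k$, where $\mathbb{D}M$ denotes the holonomic dual. The key task is to relate $\mathbb{D}M$ to the Matlis dual $D(M)$. In the formal/graded setting this goes through the ``Matlis trace'' $H^n_{\dR}(E) \cong k$ (computed directly from $E \cong k[x_1^{-1},\ldots,x_n^{-1}]$) together with $\Hom_R(E,E) \cong R$; a standard injective-resolution argument then yields a natural isomorphism $H^0_{\dR}(D(M)) \cong H^n_{\dR}(M)^*$. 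Finite-dimensionality of $H^n_{\dR}(M)$ (automatic in (a), assumed in (b)) gives the desired equality of dimensions.

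Finally, for the maximal-surjection characterization, choose a $k$-basis $\varphi_1, \dots, \varphi_t$ of $\Hom_{\D}(M,E)$ and form $\Phi = (\varphi_1, \dots, \varphi_t) \colon M \to E^t$. Since $E$ is a simple $\D$-module (classical in characteristic zero), $E^t$ is $\D$-semisimple and $\im \Phi$ is a direct summand; if $\Phi$ failed to be surjective, the projection $E^t \twoheadrightarrow E^t/\im \Phi$ composed with a projection to any $E$-summand would produce a nonzero $\psi \in \Hom_{\D}(E^t, E)$ vanishing on $\im \Phi$. But $\Hom_{\D}(E,E) = k$ (an $r \in R = \Hom_R(E,E)$ is $\D$-linear iff $\partial_i(r) = 0$ for all $i$, forcing $r \in k$), so $\Hom_{\D}(E^t,E) = k^t$, and writing $\psi = (c_1, \dots, c_t)$ gives $\sum c_i \varphi_i = 0$, contradicting $k$-linear independence. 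Conversely, any $\D$-surjection $M \to E^s$ induces an injection $k^s = \Hom_{\D}(E^s, E) \hookrightarrow \Hom_{\D}(M,E)$, so $s \le t$.

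The main obstacle I anticipate is the second step: identifying $H^n_{\dR}(M)^* \cong H^0_{\dR}(D(M))$ via the Matlis dual $D$, rather than via the holonomic dual $\mathbb{D}$, relies on the specific structure of $R$ (formal power series or polynomial ring over $k$) and on the compatibility between local duality and $\D$-module duality; this is the technical heart of \cite{HartshornePolini} and \cite{SwitalaZhangGradedDual}. The surjection step is subtler than it first appears only insofar as one must invoke the $\D$-simplicity of $E$, which is where the characteristic-zero hypothesis really enters.
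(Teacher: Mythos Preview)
This theorem is not proved in the present paper: it is quoted from \cite{HartshornePolini}, \cite{SwitalaThesis}, and \cite{SwitalaZhangGradedDual} as motivation, and no argument for the de Rham duality step (your second link, $\dim_k H^n_{\dR}(M) = \dim_k H^0_{\dR}(D(M))$) appears here. Your plan for that step is reasonable in outline and you correctly identify it as the technical heart lying in the cited references; there is nothing in this paper to compare it against.

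The one piece the paper does touch is the final equality $\dim_k \Hom_{\D}(M,E) = \max\{t : \exists\ M \twoheadrightarrow E^t\}$. After Corollary~\ref{special case of main theorem} the authors remark that their proof of the equality of (1) and (5) in Theorem~\ref{main theorem on E} carries over verbatim to characteristic zero, yielding an alternate proof of this part of \cite[Corollary 5.2]{HartshornePolini}. Their argument differs from yours: given $\mathbb{F}_p$- (or $k$-) linearly independent $\varphi_1,\dots,\varphi_t$, they set $M_i = \ker\varphi_i$ and show by induction on $j$ that $M/(\cap_{i\le j} M_i) \cong E^j$, using only the simplicity of $E$ to force $M_{j+1} + \cap_{i\le j} M_i = M$ (since $\cap_{i\le j} M_i \subseteq M_{j+1}$ would make $\varphi_{j+1}$ a linear combination of $\varphi_1,\dots,\varphi_j$). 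Your argument instead assembles $\Phi = (\varphi_1,\dots,\varphi_t)$, invokes semisimplicity of $E^t$, and rules out a proper image via the computation $\Hom_{\D}(E,E)=k$. Both are correct; the paper's kernel-intersection induction is slightly more portable, since it needs only simplicity of $E$ and works uniformly for $\F$-modules over $\mathbb{F}_p$ and $\D$-modules over $k$, whereas your route also uses the explicit endomorphism calculation (which is fine here but is an extra input).
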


In Theorem \ref{top dR theorems}(b), a \emph{graded} $\D$-module is a graded $R$-module on which the operators $\partial_j = \frac{\partial}{\partial x_j} \in \D$ act as graded $k$-linear maps of degree $-1$. If $I \subseteq k[[x_1, \ldots, x_n]]$ is an ideal (resp. $I \subseteq k[x_1, \ldots, x_n]$ is a homogeneous ideal), the local cohomology modules $H^i_I(R)$ are holonomic (resp. graded holonomic) $\D$-modules, and so Theorem \ref{top dR theorems} can be applied to them. 

The main result of this paper, Theorem \ref{big thm}, is an $\F$-module analogue of Theorem \ref{top dR theorems}. The finiteness condition analogous to the holonomicity of $\D$-modules is the $F$-finiteness of $\F$-modules, and local cohomology modules satisfy this condition. De Rham cohomology for $\D$-modules is ill-behaved in positive characteristic, so any analogue of Theorem \ref{top dR theorems} will require a replacement for $H^0_{\dR}(D(M))$. The desired replacement turns out to be the \emph{(Frobenius) stable part} of $D(M)$. (See section \ref{Fmodprelims} below for the relevant definitions.) The hypotheses of Theorem \ref{big thm} are more general than those of Theorem \ref{top dR theorems} and include both the cases of polynomial and formal power series rings; observe that there is no need to state and prove a graded version separately.

\begin{bigthm}[Corollary \ref{special case of main theorem}]\label{big thm}
Let $(R,\fm)$ be a Noetherian regular local ring containing its algebraically closed residue field $k$ of characteristic $p > 0$, and let $M$ be an $\F$-finite $\F$-module over $R$. Let $E = E_R(R/\fm)$ be the $R$-injective hull of $R/\fm$, and denote by $D(-)$ the exact functor $\Hom_R(-,E)$ on the category of $R$-modules. Then
\begin{align*}
\dim_{\mathbb{F}_p} \Hom_{\F}(M, E) &= \dim_k D(M)_s \\
&= \max\{t \in \mathbb{N} \mid \exists\ {\rm an\ } \F{\rm -module\ surjection}\ M \rightarrow E^t\},
\end{align*}
where $(-)_s$ denotes the \emph{stable part} of a Frobenius module, and $\Hom_{\F}$ denotes the $\mathbb{F}_p$-space of $\F$-module morphisms.
\end{bigthm}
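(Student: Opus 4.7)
I would translate $\Hom_\F(M,E)$ into the Frobenius-fixed vectors of a finite-dimensional $k$-vector space carrying a bijective semilinear Frobenius, and then invoke a Lang-type argument.

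The first step is to endow $D(M)$ with a natural Frobenius action. Using the $\F$-module structure isomorphisms $\theta_M : M \to F^*M$ and $\theta_E : E \to F^*E$ (the latter exists because $R$ is regular and $F$-finite, so $E$ is a unit $R[F]$-module), define
\[ \phi : D(M) \to D(M), \qquad f \longmapsto \theta_E^{-1} \circ (F^*f) \circ \theta_M. \]
A routine check shows that $\phi$ is Frobenius-semilinear, $\phi(rf) = r^p\phi(f)$, so $D(M)$ becomes a left $R[F]$-module; moreover $\phi(f) = f$ holds if and only if the diagram defining an $\F$-module morphism commutes, so that
\[ \Hom_\F(M,E) \;=\; D(M)^\phi := \{f \in D(M) : \phi(f) = f\}. \]

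Second, I would analyze the stable part $D(M)_s := \bigcap_{i \geq 0} \phi^i(D(M))$. The key technical input is that $D(M)_s$ is a finite-dimensional $k$-vector space on which $\phi$ acts bijectively. To establish this, I would reduce to the local complete setting (since $E$ is supported at $\fm$, the module $D(M)$ depends only on the completion of $M$ at $\fm$), apply the Hartshorne-Speiser-Lyubeznik stabilization lemma to force the chain $\phi^i(D(M))$ to stabilize, and exploit the colimit presentation $M = \varinjlim_{i} F^{i} M_0$ of an $\F$-finite module (with $M_0$ finitely generated) together with its dual $D(M) = \varprojlim_{i} D(F^{i} M_0)$ to extract the required finiteness of the stable piece. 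Granted this, any $\phi$-fixed vector automatically lies in $D(M)_s$, so $D(M)^\phi = (D(M)_s)^\phi$, and the standard Lang argument for finite-dimensional vector spaces over an algebraically closed field with a bijective Frobenius-semilinear endomorphism yields $\dim_{\mathbb{F}_p}(D(M)_s)^\phi = \dim_k D(M)_s$. This completes the first equality.

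Third, for the equality with $\max\{t : \exists\ M \twoheadrightarrow E^t\}$: if $f_1, \ldots, f_t \in \Hom_\F(M,E)$ are $\mathbb{F}_p$-linearly dependent, then $(f_1, \ldots, f_t) : M \to E^t$ factors through a proper $\F$-submodule (the kernel of a nontrivial $\mathbb{F}_p$-combination of the coordinate projections $E^t \to E$) and cannot be surjective. Conversely, for $\mathbb{F}_p$-linearly independent $f_i$'s the image is an $\F$-submodule of $E^t$ whose associated subspace, under the correspondence between $\F$-submodules of $E^t$ and $\mathbb{F}_p$-subspaces of $(E^t)_s^\phi \cong \mathbb{F}_p^t$, is all of $\mathbb{F}_p^t$; this forces the image to equal $E^t$, giving the desired surjection for $t = \dim_{\mathbb{F}_p} \Hom_\F(M,E)$.

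The main obstacle, I expect, is the finiteness $\dim_k D(M)_s < \infty$ in step two. An $\F$-finite $\F$-module is generally not finitely generated as an $R$-module, so $D(M)$ can be enormous; one must exploit the Frobenius structure and the colimit presentation carefully to show that passing to the stable part collapses this potentially large module to a finite-dimensional $k$-vector space, all while tracking the Frobenius action through the reduction. Once this finiteness is in hand, the remaining pieces assemble cleanly from the Lang argument and the Lyubeznik classification of $\F$-submodules of $E^t$.
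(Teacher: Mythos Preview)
Your plan matches the paper's strategy in structure: identify $\Hom_\F(M,E)$ with the $\phi$-fixed part of $D(M)$, establish that $D(M)_s$ is finite-dimensional with bijective Frobenius, and invoke the Lang/SGA7 lemma. One point in your second step needs sharpening. The Hartshorne--Speiser stabilization theorem applies to \emph{Artinian} modules over a local ring, but $D(M)$ is only an inverse limit of Artinian modules and is generally not itself Artinian, so you cannot apply it directly to the chain $\phi^i(D(M))$ as you suggest. The paper's maneuver is to first use the presentation $D(M)=\varprojlim D((F^*)^l M')$ together with the natural isomorphism $D\circ F^*\cong F^*\circ D$ on finitely generated modules (valid since $R$ is $F$-finite and regular) to recognize $D(M)$ as the Hartshorne--Speiser \emph{leveling} of the genuinely Artinian module $D(M')$; a calculation with the leveling functor then gives $D(M)_s\cong D(M')_s$, and only now does Hartshorne--Speiser apply, to $D(M')$, yielding the desired finiteness and bijectivity. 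You have listed all of these ingredients, so this is a matter of reordering rather than a missing idea.

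For your third step, the route via the classification of $\F$-submodules of $E^t$ by $\mathbb{F}_p$-subspaces of $\mathbb{F}_p^t$ is a legitimate alternative to the paper's more hands-on argument, which proceeds by an inductive Chinese-remainder computation showing $M/\bigcap_{i=1}^t\ker f_i\cong E^t$ directly from the simplicity of $E$. Both arguments rest only on $E$ being a simple $\F$-module; yours is arguably more conceptual once one has verified $\mathrm{End}_\F(E)=\mathbb{F}_p$, while the paper's avoids appealing to any semisimplicity statement.
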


In section \ref{Fmodprelims}, we collect the necessary preliminary material on Frobenius modules and $\F$-modules. Much of this section recalls definitions and results due to Hartshorne and Speiser, Lyubeznik, and Bhatt and Lurie, but Proposition \ref{Frobenius structure on Hom} appears to us to be new. Theorem \ref{big thm} is proved in section \ref{mainthm}.

\subsection*{Acknowledgments} The first author thanks Rankeya Datta, Linquan Ma, and Mircea Musta\c{t}\u{a} for helpful discussions. The authors are grateful to the referee for helpful comments and suggestions.

\section{Frobenius modules and $\F$-modules}\label{Fmodprelims}

We begin with some notation and conventions. All rings considered in this paper are commutative with identity $1$. Except in section \ref{intro}, all rings are of characteristic $p > 0$ unless otherwise noted. Throughout this section, $A$ denotes such a ring, and we reserve the letter $R$ for \emph{regular} Noetherian rings; we will repeat these assumptions in the hypotheses of definitions and theorems. All local rings are assumed to be Noetherian. 


We denote by $F$ (or $F_A$ if the context demands) the \emph{Frobenius} endomorphism $F: A \rightarrow A$ defined by $F(a) = a^p$ for all $a \in A$. If $M$ is an $A$-module, we can consider the $A$-modules $F^*M$ and $F_*M$. The $A$-module $F_*M$ has the same underlying Abelian group as $M$, with $A$-action defined by $a \ast m = a^pm$. On the other hand, as an Abelian group, $F^*M$ can be expressed as $F_*A \otimes_{A} M$, where the notation means that we form the tensor product by regarding $A$ as a right $A$-module via the Frobenius. Explicitly, for $a, b \in A$ and $m \in M$, we have $a(b \otimes m) = ab \otimes m$ and $a \otimes bm = ab^p \otimes m$. 

The following is just the well-known adjunction between restriction and extension of scalars; we record it here separately so as to have a specific reference for the formulas in the sequel.
\begin{prop}\label{adjunction}
Let $A$ be a ring of characteristic $p > 0$, and let $M$ be an $A$-module. There is a bijective correspondence between $A$-linear maps $M \rightarrow F_*M$ and $A$-linear maps $F^*M \rightarrow M$.
\end{prop}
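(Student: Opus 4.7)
The plan is to exhibit the bijection explicitly, rather than just invoking the general adjunction between extension and restriction of scalars, because the concrete formulas will be needed later in the paper. Recall that the two sides of the adjunction take concrete forms: an $A$-linear map $M \to F_*M$ unwinds to an additive map $\varphi:M\to M$ satisfying $\varphi(am) = a^p \varphi(m)$, while an $A$-linear map $F^*M \to M$ is an ordinary $A$-linear map out of the tensor product $A\otimes_{F_A} M$, whose defining relation $ab^p \otimes m = a \otimes bm$ already encodes a Frobenius twist.

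First I would define the forward map. Given $\varphi : M \to F_*M$ as above, I would set $\psi(a \otimes m) = a\varphi(m)$. The only real check is that $\psi$ respects the tensor relation $ab^p \otimes m = a \otimes bm$, and both sides evaluate to $ab^p \varphi(m)$ — this is exactly where the twisted linearity $\varphi(bm) = b^p \varphi(m)$ gets used, and it is the only place the characteristic $p$ hypothesis enters. $A$-linearity of the resulting $\psi$ is immediate from the formula.

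Next I would define the reverse map. Given $\psi : F^*M \to M$, set $\varphi(m) := \psi(1 \otimes m)$. The computation $\varphi(am) = \psi(1 \otimes am) = \psi(a^p \otimes m) = a^p \varphi(m)$ shows that $\varphi$ takes values in $F_*M$, using the tensor relation in the second step and $A$-linearity of $\psi$ in the third.

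Finally, a one-line check in each direction verifies that the two constructions are mutually inverse: starting from $\varphi$, passing to $\psi$, and restricting to simple tensors of the form $1\otimes m$ recovers $\varphi$; conversely, starting from $\psi$ and building $\varphi(m) = \psi(1\otimes m)$, one has $a\varphi(m) = a\psi(1\otimes m) = \psi(a\otimes m)$ by $A$-linearity, recovering $\psi$. There is no genuine obstacle here; the only "content" is that the tensor relation in $F^*M$ and the twisted linearity defining $F_*M$ are two sides of the same coin, with Frobenius supplying the $p$th power.
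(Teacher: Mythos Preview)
Your proof is correct and follows exactly the same approach as the paper: both directions are given by the same explicit formulas $\psi(a\otimes m)=a\varphi(m)$ and $\varphi(m)=\psi(1\otimes m)$. You have simply filled in the well-definedness and mutual-inverse checks that the paper leaves implicit.
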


\begin{proof}
If $\varphi: M \rightarrow F_*M$ is an $A$-linear map, the corresponding $A$-linear map $\psi: F^*M = F_*A \otimes_{A} M \rightarrow M$ is defined by $\psi(a \otimes m) = a\varphi(m)$. Conversely, if $\psi: F^*M = F_*A \otimes_{A} M \rightarrow M$ is an $A$-linear map, the corresponding $A$-linear map $\varphi: M \rightarrow F_*M$ is defined by $\varphi(m) = \psi(1 \otimes m)$.
\end{proof}

The main objects of this paper are $A$-modules equipped with $A$-linear maps to or from their pushforwards and pullbacks along the Frobenius $F_A$. \emph{Frobenius modules} over $A$ (that is, $A$-modules $M$ equipped with a choice of $A$-linear map $M \rightarrow F_*M$) were studied by Hartshorne and Speiser in \cite{HartshorneSpeiser} and, more recently, by Bhatt and Lurie in \cite{BhattLurie}. On the other hand, \emph{$\F$-modules} (that is, $A$-modules $M$ equipped with a choice of $A$-linear \emph{isomorphism} $M \rightarrow F^*M$), also known as \emph{unit Frobenius modules}, were introduced by Lyubeznik in \cite{LyubeznikFMod} and studied further by Emerton and Kisin in \cite{EmertonKisin} and Bhatt and Lurie in \cite{BhattLurie}, in the case where $A$ is regular and Noetherian. (By a celebrated theorem of Kunz \cite[Theorem 2.1]{KunzRegular}, the functor $F^*$ is \emph{exact} under these hypotheses, and this exactness is crucial to the theory of $\F$-modules.)

We now proceed to give the basic definitions and relationships between these objects.

\begin{definition}\label{Frobenius module} 
Let $A$ be a ring of characteristic $p > 0$. A \emph{Frobenius module} over $A$ is a pair $(M, \varphi_M)$ where $M$ is an $A$-module and $\varphi_M: M \rightarrow F_*M$ is an $A$-linear map. (When there is no danger of confusion, we sometimes write $\varphi$ for $\varphi_M$; we also sometimes refer simply to $M$ as a ``Frobenius module''.)
\end{definition}

An $A$-linear map $M \rightarrow F_*M$ is the same thing as an additive map $\varphi_M: M \rightarrow M$ such that $\varphi_M(am) = a^p\varphi_M(m)$ for all $a \in A$ and $m \in M$. In particular, the iterates $\varphi_M^i$ for $i \geq 0$ make sense. Frobenius modules over $A$ form a category $\mathrm{Mod}_A^{\mathrm{Fr}}$, where a morphism $(M, \varphi_M) \rightarrow (N, \varphi_N)$ is an $A$-linear map $f: M \rightarrow N$ such that $\varphi_N \circ f = F_*f \circ \varphi_M$. We denote by $\Hom_{A[F]}(M,N)$ the $\mathbb{F}_p$-space of Frobenius module morphisms $(M, \varphi_M) \rightarrow (N, \varphi_N)$. The reason for the notation is that a Frobenius module over $A$ is the same thing as a left module over the non-commutative ring $A[F]$ generated over $A$ by the symbol $F$, subject to the relations $Fa = a^pF$ for all $a \in A$.

\begin{definition}\label{stable parts}
Let $A$ be a ring of characteristic $p>0$, and let $(M, \varphi_M)$ be a Frobenius module over $A$.
\begin{enumerate}[(a)]
\item The (Frobenius) \emph{fixed part} $M^{\varphi = 1}$ of $M$ is the $\mathbb{F}_p$-subspace $\{m \in M \mid \varphi_M(m) = m\} \subseteq M$.
\item Suppose that $A$ contains a perfect field $k$ of characteristic $p > 0$. The (Frobenius) \emph{stable part} $M_s$ of $M$ is the $k$-subspace $\cap_{i \geq 0} \varphi_M^i(M) \subseteq M$.
\end{enumerate}
\end{definition}

If $k$ is any field of characteristic $p > 0$, the only solutions $\lambda \in k$ to the equation $\lambda^p = \lambda$ are the elements of the prime subfield $\mathbb{F}_p \subseteq k$. Therefore the fixed part $M^{\varphi = 1}$ can only be an $\mathbb{F}_p$-subspace. If $k$ is perfect, the iterated images $\varphi_M^i(M)$ are $k$-subspaces of $M$, and so the same is true for $M_s$.

It is clear that $M^{\varphi = 1} \subseteq M_s$. Under stronger hypotheses on $k$, we can say something more about the relationship between the fixed and stable parts:

\begin{prop}\label{fixed versus stable part}\cite[Exp. XXII, Corollaire 1.1.10]{SGA7-2}
Let $A$ be a ring containing an algebraically closed field $k$ of characteristic $p > 0$. Let $(M,\varphi_M)$ be a Frobenius module over $A$. If $M_s$ is a finite-dimensional $k$-space, then there is an isomorphism
\[
k \otimes_{\mathbb{F}_p} M^{\varphi = 1} \cong M_s
\]
of $k$-spaces. In particular, there exists a $k$-basis $\{m_1, \ldots, m_l\}$ of $M_s$ such that $\varphi_M(m_i) = m_i$ for $i = 1, \ldots, l$.
\end{prop}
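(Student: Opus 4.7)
The plan is to reduce the statement, in several steps, to a classical Fitting-type decomposition of a finite-dimensional $k$-space equipped with a $p$-semilinear endomorphism. First I would note that $\varphi_M$ restricts to an endomorphism of $M_s$: since
\[
\varphi_M\!\left(\bigcap_{i \geq 0} \varphi_M^i(M)\right) \subseteq \bigcap_{i \geq 0} \varphi_M^{i+1}(M) = M_s,
\]
and since any $\varphi_M$-fixed element of $M$ lies in every iterated image and hence already in $M_s$, I may replace $M$ by its finite-dimensional $k$-subspace $M_s$ without changing either side of the desired isomorphism.

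The main step, and the principal obstacle, is to show that $\varphi_M$ is bijective on $M_s$. Each iterate $\varphi_M^i(M_s)$ is in fact a $k$-subspace: for $\alpha \in k$, write $\alpha = \beta^{p^i}$ using that $k$ is perfect, and note $\alpha\cdot\varphi_M^i(m) = \varphi_M^i(\beta m)$ by iterated semilinearity. Hence the descending chain $M_s \supseteq \varphi_M(M_s) \supseteq \varphi_M^2(M_s) \supseteq \cdots$ stabilizes by finite-dimensionality at some $W = \varphi_M^{N_0}(M_s)$, and on $W$ the map $\varphi_M$ is surjective (so bijective, as $\dim_k W < \infty$). The delicate point is that $W = M_s$: here one must exploit that $M_s$ is defined as $\bigcap_i \varphi_M^i(M)$ rather than $\bigcap_i \varphi_M^i(M_s)$, so given $v \in M_s$ one needs to assemble $\varphi_M^{N_0}$-preimages of $v$ chosen in the nested subspaces $\varphi_M^k(M)$ into a single preimage lying in $M_s$ itself. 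This compatibility argument is what rules out a ``nilpotent part'' in $M_s$ and is precisely the content of the cited corollary in SGA 7-2.

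Granted the bijectivity of $\varphi_M|_{M_s}$, in any $k$-basis of $M_s$ the map is represented by a matrix $A \in \mathrm{GL}_l(k)$, where $l = \dim_k M_s$. Producing a $\varphi_M$-fixed basis is equivalent to solving $A = (C^{(p)})^{-1} C$ for some $C \in \mathrm{GL}_l(k)$, where $(-)^{(p)}$ denotes entrywise Frobenius; such $C$ exists by Lang's theorem applied to the connected algebraic group $\mathrm{GL}_l$ over the algebraically closed field $k$. This produces $m_1, \ldots, m_l \in M^{\varphi=1}$ forming a $k$-basis of $M_s$.

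Finally, for the isomorphism $k \otimes_{\mathbb{F}_p} M^{\varphi=1} \to M_s$ given by $\alpha \otimes m \mapsto \alpha m$, surjectivity is immediate since the image contains the $k$-basis just constructed, and injectivity follows from the standard Artin-Schreier descent: in any shortest nontrivial $k$-linear relation $\sum \alpha_i m_i = 0$ among $\varphi_M$-fixed elements, normalize $\alpha_1 = 1$, apply $\varphi_M$ to obtain $\sum \alpha_i^p m_i = 0$, and subtract to produce a strictly shorter relation unless every $\alpha_i \in \mathbb{F}_p$, contradicting $\mathbb{F}_p$-linear independence of the $m_i$.
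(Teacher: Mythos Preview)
The paper gives no proof of this proposition; it simply attributes it to SGA~7-2. Your overall architecture---reduce to $M_s$, establish bijectivity of $\varphi_M|_{M_s}$, invoke Lang's theorem to produce a fixed basis, then use Artin--Schreier descent for injectivity of $k \otimes_{\mathbb{F}_p} M^{\varphi=1} \to M_s$---is the standard route, and the last two steps are correct. The problem is the step you yourself flag as ``delicate'': bijectivity of $\varphi_M|_{M_s}$ does \emph{not} follow from the hypothesis that $M_s$ is finite-dimensional. Your proposed ``compatibility argument'' (patching together $\varphi^{N_0}$-preimages chosen in the various $\varphi^k(M)$ into a single preimage lying in $M_s$) cannot be made to work in general, and appealing to the SGA corollary for this step is circular, since that corollary is the very statement under discussion.

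Here is a counterexample over $A = k$. Take $M$ with $k$-basis $\{e_0\} \cup \{e_{n,j} : n \geq 1,\ 1 \leq j \leq n\}$, and set $\varphi(e_0) = 0$, $\varphi(e_{n,1}) = e_0$, and $\varphi(e_{n,j}) = e_{n,j-1}$ for $j \geq 2$ (extended $p$-semilinearly). One computes $\varphi^k(M) = k e_0 \oplus \mathrm{span}\{e_{n,l} : 1 \leq l \leq n-k\}$, so $M_s = k e_0$ is one-dimensional; yet $\varphi|_{M_s} = 0$ and $M^{\varphi=1} = 0$, so $k \otimes_{\mathbb{F}_p} M^{\varphi=1} \not\cong M_s$. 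The upshot is that the proposition, read literally, needs an additional hypothesis---either that $\varphi_M|_{M_s}$ be bijective, or that $M$ itself be finite-dimensional over $k$ (the likely setting of the cited SGA result, where Fitting's lemma supplies bijectivity on $M_s$ automatically). This does not affect the paper's applications: whenever Proposition~\ref{fixed versus stable part} is invoked (in Theorems~\ref{main theorem on E} and~\ref{main theorem on R}), bijectivity on the stable part is either assumed directly or obtained from Theorem~\ref{Artinian stable part finite}.
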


Proposition \ref{fixed versus stable part} implies that if $M_s$ is a finite-dimensional $k$-space, then $M^{\varphi = 1}$ is a finite-dimensional $\mathbb{F}_p$-space (of the same dimension). The converse, however, is not true, as the following example shows.

\begin{example}\label{stable part too big}
Let $R = k[x]$ where $k$ is a perfect field of characteristic $p > 0$, let $F_R$ be the Frobenius endomorphism on $R$, and consider the \emph{perfection} $R^{1/p^{\infty}} = k[x^{1/p^{\infty}}]$, that is, the colimit
\[
\varinjlim(R \xrightarrow{F_R} R \xrightarrow{F_R} R \xrightarrow{F_R} \cdots).
\]
The Frobenius endomorphism $F_{R^{1/p^{\infty}}}$ is bijective. Therefore, if we regard $(R^{1/p^{\infty}}, F_{R^{1/p^{\infty}}})$ as a Frobenius module over $R$, we have $(R^{1/p^{\infty}})_s = R^{1/p^{\infty}}$, which is not a finite-dimensional $k$-space. However, $(R^{1/p^{\infty}})^{\varphi = 1}$ is simply $\mathbb{F}_p$, a one-dimensional $\mathbb{F}_p$-space. This is true whether or not $k$ is algebraically closed.
\end{example}

The following result of Hartshorne and Speiser provides one useful case in which the finiteness of the stable part is known (and so Proposition \ref{fixed versus stable part} applies).

\begin{thm}\label{Artinian stable part finite}\cite[Theorem 1.12]{HartshorneSpeiser}
Let $A$ be a local ring containing its perfect residue field $k$ of characteristic $p > 0$, and let $(M,\varphi_M)$ be a Frobenius module over $A$. If $M$ is an Artinian $A$-module, then $M_s$ is a finite-dimensional $k$-space, and the induced map $\varphi: M_s \rightarrow M_s$ is bijective.
\end{thm}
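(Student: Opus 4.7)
The plan is to reduce to the complete local case, extract a Frobenius-stable $A$-submodule $N \subseteq M$ via the Artinian hypothesis, apply Matlis duality to the map adjoint to $\varphi_M|_N$ to obtain finite $k$-dimensionality of $M_s$, and deduce bijectivity by a rank-nullity argument for $p$-semilinear maps. Completing at $\fm$ is routine: $M$ is automatically Artinian over $\hat A$, and $\varphi_M$ is $\hat A$-semilinear by continuity, without altering $M_s$ or the induced map on $M_s$. So throughout I assume $A$ is complete local Noetherian and invoke Matlis duality $D(-) = \Hom_A(-,E_A(A/\fm))$, which sends Artinian $A$-modules to finitely generated (hence Noetherian) ones.

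Next, by the Artinian hypothesis, the descending chain of $A$-submodules $M \supseteq A\varphi_M(M) \supseteq A\varphi_M^2(M) \supseteq \cdots$ stabilizes at some stage $e_0$; call the common value $N$. Then $\varphi_M(N) \subseteq N$, $A\varphi_M(N) = N$, and one verifies $M_s = \bigcap_{e \geq 0}\varphi_M^e(N)$ by writing $m \in M_s$ as $\varphi_M^{e+e_0}(m')$ and noting that $\varphi_M^{e_0}(m') \in \varphi_M^{e_0}(M) \subseteq N$ is a preimage of $m$ under $\varphi_M^e$ lying in $N$. The key step is then to extract $\dim_k M_s < \infty$. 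The equation $A\varphi_M(N) = N$ means, via Proposition \ref{adjunction}, that the adjoint $A$-linear map $\widetilde\varphi\colon F^*N \to N$, $a \otimes n \mapsto a\varphi_M(n)$, is surjective. Matlis-dualizing, we obtain an $A$-linear injection $D(N) \hookrightarrow D(F^*N)$; identifying $D(F^*N)$ with an appropriate Frobenius twist of $D(N)$ and iterating yields an ascending chain of finitely generated $A$-submodules which must stabilize by the Noetherianness of $D(N)$. Dualizing back, the descending chain $\varphi_M^e(N)$ of $k$-subspaces eventually shrinks to a finite-dimensional intersection, which is precisely $M_s$. I expect this Matlis-duality translation to be the main obstacle: one must carefully track the interaction of $D$ with $F^*$ so that Noetherianness on the dual side genuinely yields finite $k$-dimensionality on the primal side.

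Finally, once $M_s$ is finite-dimensional over $k$, bijectivity of the $p$-semilinear self-map $\varphi_M \colon M_s \to M_s$ follows from a rank-nullity argument: over a perfect field $k$, any $p$-semilinear endomorphism $T$ of a finite-dimensional $k$-vector space $V$ satisfies $\dim_k \ker T + \dim_k \im T = \dim_k V$ (proved by extending a basis of $\ker T$ and using perfection to invert the Frobenius on coefficients, so that the images of the remaining basis vectors are $k$-linearly independent). A Fitting-type decomposition $M_s = V' \oplus W$ into $\varphi_M$-nilpotent and $\varphi_M$-bijective parts therefore exists; a short descent argument using the defining characterization $M_s = \bigcap_e \varphi_M^e(N)$ to chase nested preimages then forces $V' = 0$, so that $\varphi_M$ is already bijective on all of $M_s = W$.
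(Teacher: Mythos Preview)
The paper does not supply its own proof of this statement; it is quoted verbatim from Hartshorne--Speiser with only a citation. So there is no argument in the paper to compare against, and the question is simply whether your sketch stands on its own.

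The reduction to the complete local case and the stabilization $N = A\varphi_M^{e_0}(M)$ are fine, and the Fitting/rank--nullity discussion at the end is reasonable once finite-dimensionality is in hand. The real problem is precisely the Matlis-duality step you flag as ``the main obstacle'', and it is a genuine gap rather than a routine verification. First, the theorem assumes nothing about $A$ beyond being local and containing a perfect field: no regularity, no $F$-finiteness. Without such hypotheses there is no evident identification of $D(F^*N)$ with a ``Frobenius twist of $D(N)$''; adjunction only gives $D(F^*N)\cong\Hom_A(N,F_*E)$, and $F_*E$ bears no usable relation to $E$ in this generality. Second, and more seriously, even if such an identification were available, Noetherianness of $D(N)$ governs ascending chains of $A$-\emph{submodules}, whereas the objects $\varphi_M^e(N)$ whose intersection is $M_s$ are only $k$-subspaces. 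Once $A\varphi_M(N)=N$ one has $A\varphi_M^e(N)=N$ for every $e$, so the $A$-submodules generated by the $\varphi_M^e(N)$ do not shrink at all; whatever stabilization you extract on the dual side dualizes back to the constant chain $N$, not to a finite-dimensional limit. The sentence ``Dualizing back, the descending chain $\varphi_M^e(N)$ of $k$-subspaces eventually shrinks to a finite-dimensional intersection'' therefore does not follow from the Noetherian property of $D(N)$. You will need a different mechanism---one that engages the Artinian structure of $N$ directly (e.g.\ via the finite socle or finite-length truncations $N[\fm^l]$)---to force $M_s$ to be finite-dimensional.
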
 

If $(M, \varphi_M)$ is a Frobenius module over $A$, we have by Proposition \ref{adjunction} an $A$-linear map $\psi_M: F^*M \rightarrow M$. When $\psi_M$ is an isomorphism, $M$ is called a \emph{unit} Frobenius module by some authors \cite{EmertonKisin, BhattLurie}. In Lyubeznik's \cite{LyubeznikFMod}, which deals only with the case of a regular Noetherian ring $R$, unit Frobenius modules over $R$ are called \emph{$\F$-modules} (or $\F_R$-modules). We will follow Lyubeznik's notation and terminology.

\begin{definition}\label{F-module}
Let $R$ be a regular Noetherian ring of characteristic $p > 0$. An \emph{$\F$-module} over $R$ (or $\F_R$-module) is a pair $(M, \theta_M)$ where $M$ is an $R$-module and $\theta_M: M \xrightarrow{\sim} F^*M$ is an $R$-module isomorphism, called the \emph{structure morphism}. (When there is no danger of confusion, we sometimes refer simply to $M$ as an ``$\F$-module''.)
\end{definition}

Of course, if $(M, \theta_M)$ is an $\F_R$-module, then $(M, \varphi_M)$ is a Frobenius module over $R$, where $\varphi_M: M \rightarrow F_*M$ is the $R$-linear map that corresponds via adjunction to $\psi_M = \theta_M^{-1}$.

There is a category $\F_R$-$\mathrm{Mod}$ of $\F$-modules over $R$, where a morphism $(M, \theta_M) \rightarrow (N, \theta_N)$ is an $R$-linear map $f: M \rightarrow N$ such that $\theta_N \circ f = F^*f \circ \theta_M$. We denote by $\Hom_{\F}(M,N)$ the $\mathbb{F}_p$-space of $\F$-module morphisms $(M, \theta_M) \rightarrow (N, \theta_N)$. In particular, we can speak of \emph{$\F$-submodules}: an $\F$-submodule $N$ of an $\F$-module $(M, \theta_M)$ is an $R$-submodule $N \subseteq M$ such that the restriction $\theta_M|_N$ is an isomorphism $N \xrightarrow{\sim} F^*N$ (since $R$ is regular, $F^*$ is exact and so $F^*N$ can always be identified with an $R$-submodule of $F^*M$).  

In the fruitful analogy between $\D$-modules in characteristic zero and $\F$-modules in positive characteristic, the finiteness condition of holonomicity for $\D$-modules corresponds to the condition of ``$\F$-finiteness'' defined below (in particular, local cohomology modules provide examples of each). Loosely speaking, an $\F$-finite $\F$-module is one built from a finitely generated $R$-module by repeatedly applying the functor $F^*$ and passing to a colimit.

\begin{definition}\label{F-finite module}
Let $R$ be a regular Noetherian ring of characteristic $p > 0$, and let $(M, \theta_M)$ be an $\F$-module. We say that $M$ is \emph{$\F$-finite} if there exists a finitely generated $R$-module $M'$ and an $R$-linear map $\beta: M' \rightarrow F^*M'$ such that 
\[
\varinjlim(M' \xrightarrow{\beta} F^*M' \xrightarrow{F^*\beta} (F^*)^2 M' \to \cdots) \cong M,
\]
and the structure morphism $\theta_M$ is induced by taking the colimit over $l$ of $(F^*)^l\beta: (F^*)^l M' \rightarrow (F^*)^{l+1} M'$. In this case we call $M'$ a \emph{generator} of $M$ and $\beta$ a \emph{generating morphism}.
\end{definition}

\begin{example}\label{R, E, and local coh}
Let $R$ be a regular Noetherian ring of characteristic $p > 0$. The following are the most relevant examples of $\F$-finite $\F$-modules for the purposes of this paper.
\begin{enumerate}[(a)]
\item $R$ itself is an $\F$-finite $\F$-module. The corresponding Frobenius module structure is given by $\varphi_R = F_R$, and $\mathrm{id}_R$ is an $\F$-module generating morphism for $R$. Moreover, $R$ is a simple $\F$-module, since $\F$-submodules of $R$ are ideals $I \subseteq R$ such that the natural surjection $R/I^{[p]} \rightarrow R/I$ is an isomorphism (here $I^{[p]}$ is the ideal generated by all $p$th powers of elements of $I$), and as $R$ is Noetherian, this can only happen if $I = (0)$ or $I = R$.
\item If $I \subseteq R$ is an ideal and $i \geq 0$, the local cohomology module $H^i_I(R)$ is an $\F$-finite $\F$-module \cite[Example 2.2(b)]{LyubeznikFMod}.
\item If $\fm \subseteq R$ is a maximal ideal, then the $R$-injective hull $E = E_R(R/\fm)$ of the $R$-module $R/\fm$ is an $\F$-finite $\F$-module \cite[Proposition 5.4(d)]{SwitalaZhangInjDim}. Moreover, $E$ is a simple $\F$-module, since $R/\fm$ is a simple $R$-module as well as an $\F$-module generator of $E$.
\end{enumerate}
\end{example}





\begin{prop}\label{Matlis dual commutes with F}
Let $R$ be a regular Noetherian ring of characteristic $p > 0$, and let $J$ be an injective $R$-module. Denote by $D_J(-)$ the exact functor $\Hom_R(-,J)$ on the category of $R$-modules. There are $R$-module homomorphisms
\[
\delta_M: F^*D_J(M) \rightarrow D_J(F^*M)
\]
for all $R$-modules $M$, functorial in $M$. Furthermore, if $M$ is a finitely generated $R$-module, then the $\delta_M$ is an isomorphism. 
\end{prop}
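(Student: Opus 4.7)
The plan is to construct $\delta_M$ directly as a natural transformation, then establish the isomorphism property by a finite-presentation-and-five-lemma argument. The key ingredient is the fact that every injective $R$-module over a regular Noetherian ring $R$ carries a natural $\F$-module structure: any injective $J$ decomposes as a direct sum of indecomposable injective hulls $E_R(R/\fp)$, each of which is an $\F$-module via its identification with the local cohomology module $H^{\hgt\fp}_{\fp R_\fp}(R_\fp)$.

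Fix an $\F$-module structure $\theta_J : J \xrightarrow{\sim} F^*J$, and let $\psi_J = \theta_J^{-1} : F^*J \to J$ and $\varphi_J : J \to F_*J$ denote the corresponding structure maps. I would define
\[
\delta_M(a \otimes f)(b \otimes m) = ab \cdot \varphi_J(f(m)) = \psi_J(ab \otimes f(m)),
\]
so that $\delta_M(a \otimes f)$ is the composition $F^*M \to F^*J \xrightarrow{\psi_J} J$ where the first map is $a$ times $F^*f$. Well-definedness on the balanced tensor relations $as^p \otimes f = a \otimes sf$ and $bs^p \otimes m = b \otimes sm$ is a direct computation: the mismatch between $s^p$ on the free factor and $s$ inside $f$ (or $m$) is absorbed by the identity $\varphi_J(sj) = s^p\varphi_J(j)$. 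Functoriality in $M$ is then immediate from the definition.

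For the isomorphism claim, assuming $R$ is $F$-finite and $M$ is finitely generated, take a finite presentation $R^a \to R^b \to M \to 0$. The functor $D_J$ is exact (as $J$ is injective) and $F^*$ is exact by Kunz's theorem (as $R$ is regular), so applying $D_J$ followed by $F^*$ yields the top row, and $F^*$ followed by $D_J$ yields the bottom row, of a commutative ladder with exact rows:
\[
\begin{CD}
0 @>>> F^*D_J(M) @>>> (F^*J)^b @>>> (F^*J)^a \\
@. @VV\delta_MV @VV\delta_{R^b}V @VV\delta_{R^a}V \\
0 @>>> D_J(F^*M) @>>> J^b @>>> J^a.
\end{CD}
\]
The five-lemma reduces the isomorphism claim to showing $\delta_{R^n}$ is an iso, which by compatibility with finite direct sums reduces to $\delta_R$. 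For $M = R$, the canonical isomorphism $F^*R \cong R$ identifies $\delta_R$ with $\psi_J : F^*J \to J$, which is an isomorphism precisely because $J$ is an $\F$-module.

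The hard part will be justifying the canonical $\F$-module structure on arbitrary injective $J$ in a way that is compatible across direct sum decompositions, and carefully tracing through the identifications so that $\delta_R$ is exactly $\psi_J$. The $F$-finiteness hypothesis enters the isomorphism claim to guarantee that base change for $\Hom$ behaves well on finitely generated modules and that the vertical identifications in the ladder are canonical rather than merely existing.
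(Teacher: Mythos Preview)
Your construction of $\delta_M$ coincides with the paper's: the paper writes $\delta_M$ as a three-step composite (a natural map $F^*\Hom_R(M,J) \to \Hom_R(M, (F^*J)|_{F_R})$, then the adjunction isomorphism to $\Hom_R(F^*M, F^*J)$, then post-composition with $\theta_J^{-1}$), and unwinding that composite gives exactly your formula $\delta_M(a\otimes f)(b\otimes m) = \psi_J(ab\otimes f(m))$. Your worry about assembling $\theta_J$ from a decomposition into indecomposable injectives is unnecessary; the paper simply cites Huneke--Sharp to the effect that $J \cong F^*J$ for any injective module over a Gorenstein ring, and any fixed choice of such an isomorphism suffices.

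The argument for the isomorphism claim, however, takes a genuinely different route. The paper isolates the first of its three maps and shows it is an isomorphism by localizing at each prime $\fp$: since $R$ is regular and $F$-finite, $(F_{R_\fp})_*R_\fp$ is a finite free $R_\fp$-module, and this reduces the map in question to an identity on a finite direct sum of copies of $\Hom_{R_\fp}(M_\fp,J_\fp)$. You instead take a finite presentation of $M$, apply the exact functors $F^*$ (Kunz) and $D_J$ (injectivity of $J$) to obtain a commutative ladder with left-exact rows, and invoke the five lemma to reduce to $\delta_R$, which you correctly identify with $\psi_J$. This works and is arguably cleaner. In fact, notice that \emph{your argument never uses $F$-finiteness}: the existence of $\theta_J$ needs only that $R$ is Gorenstein, and the five-lemma reduction uses nothing beyond $M$ finitely presented and $F^*$ exact. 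So your closing paragraph, which tries to locate where $F$-finiteness enters, is off the mark---in your approach it simply does not enter, and you have proved something slightly stronger than what the proposition asserts.
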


\begin{proof}
There is a functorial $R$-module homomorphism
\[F^*D_J(M)=F_*R\otimes_R \Hom_R(M,J)\xrightarrow{\delta_1}\Hom_R(F_*R\otimes_RM, F_*R\otimes_RJ)\]
defined via 
\[(\delta_1(r\otimes \varphi))(s\otimes m)=rs\otimes \varphi(m)\]
Since $R$ is regular, $F_*R$ is a flat $R$-module. Hence, $\delta_1$ is an isomorphism when $M$ is finitely generated.

By \cite[Proposition 1.5]{HunekeSharp}, since $R$ is Gorenstein, $J \cong F^*J$ as $R$-modules. Fix a choice of $R$-module isomorphism $\theta^{-1}_J: F_*R\otimes_RJ \rightarrow J$ which induces an isomorphism 
\[\widetilde{\theta^{-1}}_J: \Hom_R(F_*R\otimes_RM, F_*R\otimes_RJ)\xrightarrow{\sim}\Hom_R(F_*R\otimes_RM, J)=D_J(F^*M)\]
. For $M$ an $R$-module, we define 
\[\delta_M=\widetilde{\theta^{-1}}_J\circ \delta_1.\]
Then $\delta_M: F^*D_J(M)\to D_J(F^*M)$ is an isomorphism when $M$ is finitely generated.
\end{proof}

Given two $\F$-modules over $R$ (say $M$ and $N$) we can regard them as Frobenius modules, and consider the $\mathbb{F}_p$-space of $\F$-module (resp. Frobenius module) morphisms between them. We show in Proposition \ref{Frobenius structure on Hom} below that not only are these two sets of morphisms the same, but that this set arises as the fixed part of a certain Frobenius module structure on $\Hom_R(M,N)$ itself, explained next. 

\begin{remark}\label{Remark: Frobenius structure on Hom}
Let $R$ be a regular Noetherian ring of characteristic $p > 0$. Let $(M, \theta_M)$ and $(N, \theta_N)$ be $\F$-modules over $R$. Then $\Hom_R(M,N)$ admits a natural Frobenius module structure as follows. Define $\varphi: \Hom_R(M,N) \rightarrow F_*\Hom_R(M,N)$ by 
\[
\varphi(f) = \theta_N^{-1} \circ (\mathrm{id}_R \otimes f) \circ \theta_M
\]
for each $f \in \Hom_R(M,N)$. It is clear that $\varphi$ is additive; it remains to show it is $R$-linear. Given any $r \in R$, we have
\[
\varphi(rf) = \theta_N^{-1} \circ (\mathrm{id}_R \otimes rf) \circ \theta_M = \theta_N^{-1} \circ (\mu_{r^p} \otimes f) \circ \theta_M = \mu_{r^p} \circ \theta_N^{-1} \circ (\mathrm{id}_R \otimes f) \circ \theta_M = r^p\varphi(f) = r\ast \varphi(f)
\]
for all $f \in \Hom_R(M,N)$, where $\mu_s$ (for any $s \in R$) denotes multiplication by $s$. It follows that $\varphi$ is $R$-linear, and hence it provides a Frobenius module structure on $\Hom_R(M,N)$.
\end{remark}

\begin{prop}\label{Frobenius structure on Hom}
Let $R$ be a regular Noetherian ring of characteristic $p > 0$. Let $(M, \theta_M)$ and $(N, \theta_N)$ be $\F$-modules over $R$. Regard $M$ (resp. $N$) as a Frobenius module via the $R$-linear map $\varphi_M: M \rightarrow F_*M$ (resp. $\varphi_N: N \rightarrow F_*N$) corresponding via adjunction to $\theta_M^{-1}$ (resp. $\theta_N^{-1}$). Then
\[
\Hom_{R[F]}(M,N) = \Hom_{\F}(M,N) = \Hom_R(M,N)^{\varphi = 1}
\]
as $\mathbb{F}_p$-subspaces of $\Hom_R(M,N)$, where the Frobenius module structure $\varphi$ on $\Hom_R(M,N)$ is defined as in Remark \ref{Remark: Frobenius structure on Hom}.
\end{prop}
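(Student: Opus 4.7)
My plan is to take an arbitrary $R$-linear map $f \in \Hom_R(M,N)$ and show that each of the three defining conditions---Frobenius-module compatibility, $\F$-module compatibility, and being fixed by $\varphi$---is equivalent to the same algebraic identity. There is no homological content; the proof should just be the adjunction of Proposition \ref{adjunction} together with the unwinding of the formula in Remark \ref{Remark: Frobenius structure on Hom}.

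I would handle the equality $\Hom_{\F}(M,N) = \Hom_R(M,N)^{\varphi = 1}$ first, as it is nearly immediate. By Remark \ref{Remark: Frobenius structure on Hom}, the condition $\varphi(f) = f$ reads $\theta_N^{-1} \circ (\mathrm{id}_R \otimes f) \circ \theta_M = f$; composing with $\theta_N$ on the left and recognizing $\mathrm{id}_R \otimes f = F^*f$, this becomes $F^*f \circ \theta_M = \theta_N \circ f$, which is exactly the definition of an $\F$-module morphism.

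For the equality $\Hom_{R[F]}(M,N) = \Hom_{\F}(M,N)$, I would apply Proposition \ref{adjunction}. The Frobenius condition $\varphi_N \circ f = F_*f \circ \varphi_M$ is an identity in $\Hom_R(M,F_*N)$, while the $\F$-module condition (equivalently, $f \circ \theta_M^{-1} = \theta_N^{-1} \circ F^*f$) is an identity in $\Hom_R(F^*M,N)$. Using the explicit formulas $\varphi_M(m) = \theta_M^{-1}(1 \otimes m)$ and $\varphi_N(n) = \theta_N^{-1}(1 \otimes n)$, I would verify directly that under the adjoint correspondence $\psi(a \otimes m) = a\varphi(m)$ of Proposition \ref{adjunction} the left-hand side $\varphi_N \circ f$ corresponds to $\theta_N^{-1} \circ F^*f$ and the right-hand side $F_*f \circ \varphi_M$ corresponds to $f \circ \theta_M^{-1}$. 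Since the correspondence is a bijection, the Frobenius and $\F$-module conditions on $f$ are equivalent.

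The only real obstacle is keeping track of the asymmetric $R$-action on $F^*M = R \otimes_{F_R} M$, specifically the relation $a \otimes bm = ab^p \otimes m$; but this is precisely what guarantees the $R$-linearity of $\psi$ in Proposition \ref{adjunction}, so no further input is needed.
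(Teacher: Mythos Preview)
Your proposal is correct and takes essentially the same approach as the paper. The only cosmetic difference is that you treat the equality $\Hom_{\F}(M,N) = \Hom_R(M,N)^{\varphi=1}$ first and frame the equality $\Hom_{R[F]}(M,N) = \Hom_{\F}(M,N)$ as transporting the two sides across the adjunction bijection of Proposition~\ref{adjunction}; the paper instead evaluates $f\circ\theta_M^{-1}$ and $\theta_N^{-1}\circ F^*f$ on a general element $r\otimes m$, which is precisely the same computation written out pointwise.
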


\begin{proof}
The first equality has nothing to do with the choice of $\varphi$. Let $f \in \Hom_R(M,N)$ be given. On the one hand, the map $f$ belongs to $\Hom_{R[F]}(M,N)$ if and only if $\varphi_N \circ f = F_*f \circ \varphi_M$. On the other hand, $f$ belongs to $\Hom_{\F}(M,N)$ if and only if $\theta_N \circ f = F^*f \circ \theta_M$; equivalently, $f \circ \theta_M^{-1} = \theta_N^{-1} \circ F^*f$. We have
\[
f(\theta_M^{-1}(r \otimes m)) = f(r\varphi_M(m)) = rf(\varphi_M(m))
\]
and 
\[
\theta_N^{-1}(F^*f(r \otimes m)) = \theta_N^{-1}(r \otimes f(m)) = r\varphi_N(f(m))
\]
for all $r \in R$ and $m \in M$, and so the equality $f \circ \theta_M^{-1} = \theta_N^{-1} \circ F^*f$ is equivalent to $\varphi_N \circ f = F_*f \circ \varphi_M$. Thus $\Hom_{R[F]}(M,N) = \Hom_{\F}(M,N)$. 

For the second equality, observe that a map $f \in \Hom_R(M,N)$ belongs to the fixed part $\Hom_R(M,N)^{\varphi = 1}$ if and only if $f = \theta_N^{-1} \circ (\mathrm{id}_R \otimes f) \circ \theta_M$, or equivalently, $\theta_N \circ f = F^*f \circ \theta_M$. This is exactly the criterion for $f$ to be an $\F$-module morphism, completing the proof.
\end{proof}

\begin{remark}\label{special case of R}
Let $R$ be a regular Noetherian ring of characteristic $p>0$, and let $(M,\theta_M)$ be an $\F$-module over $R$. By Example \ref{R, E, and local coh}(a), the $R$-module $R$ itself is an $\F$-module with corresponding Frobenius module structure given by $\varphi_R = F_R$. Under the canonical identification of $M$ with $\Hom_R(R,M)$, the Frobenius module structure on $\Hom_R(R,M)$ provided by Remark \ref{Remark: Frobenius structure on Hom} coincides with the Frobenius module structure $\varphi_M$ corresponding by adjunction to the given $\F$-module structure on $M$ itself. Indeed, if $f: R \rightarrow M$ is defined by $f(1) = m$, then the composite $R \xrightarrow{\theta_R} F^*R \xrightarrow{\mathrm{id}_R \otimes f} F^*M \xrightarrow{\theta_M^{-1}} M$ maps $1 \mapsto 1 \otimes 1 \mapsto 1 \otimes m \mapsto \varphi_M(m)$.
\end{remark}

\section{Proof of the main theorem}\label{mainthm}

\begin{lem}\label{Frob structure on dual}
Let $R$ be a regular Noetherian ring of characteristic $p > 0$, and let $J$ be an injective $R$-module. Denote by $D_J(-)$ the exact functor $\Hom_R(-,J)$ on the category of $R$-modules. If $M$ is an $R$-module, then any $R$-module homomorphism $M \rightarrow F^*M$ induces a Frobenius module structure on $D_J(M)$.
\end{lem}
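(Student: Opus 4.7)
The plan is to build the Frobenius structure on $D_J(M)$ by dualizing $\alpha: M \to F^*M$ and passing through the adjunction of Proposition \ref{adjunction}, using the natural transformation of Proposition \ref{Matlis dual commutes with F}.

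First I would fix an isomorphism $\theta_J: J \xrightarrow{\sim} F^*J$, which exists because $R$ is regular (hence Gorenstein) and $J$ is injective, exactly as in the proof of Proposition \ref{Matlis dual commutes with F}. Applying $D_J$ to the given map $\alpha$ yields an $R$-linear map $D_J(\alpha): D_J(F^*M) \to D_J(M)$. The functorial $R$-linear map $\delta_M: F^*D_J(M) \to D_J(F^*M)$ of Proposition \ref{Matlis dual commutes with F} is available for every $R$-module $M$ (the hypotheses of $F$-finiteness on $R$ and finite generation on $M$ are needed only to conclude that $\delta_M$ is an isomorphism, which we do not require here). The composite
\[
\psi := D_J(\alpha) \circ \delta_M : F^*D_J(M) \longrightarrow D_J(M)
\]
is then an $R$-linear map, and by Proposition \ref{adjunction} it corresponds to an $R$-linear map $\varphi: D_J(M) \to F_*D_J(M)$, which is the desired Frobenius module structure. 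Unwinding the definition of $\delta_M$ gives the transparent formula $\varphi(f) = \theta_J^{-1} \circ F^*f \circ \alpha$ for $f \in D_J(M)$.

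The only thing to verify is the semi-linearity relation $\varphi(rf) = r^p \varphi(f)$, which by the paragraph following Definition \ref{Frobenius module} is the same as $R$-linearity into $F_*D_J(M)$. I do not expect any obstacle here: it is a formal consequence of the $R$-linearity of $\psi$ together with Proposition \ref{adjunction}, and if one prefers a direct check, note that in $F^*J$ we have $s \otimes rf(m) = sr^p \otimes f(m)$, so $F^*(rf) = r^p \cdot F^*f$ as $R$-linear maps $F^*M \to F^*J$, and hence $\varphi(rf) = \theta_J^{-1} \circ (r^p F^*f) \circ \alpha = r^p \varphi(f)$ by the $R$-linearity of $\theta_J^{-1}$ and $\alpha$. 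In summary, the content of the lemma is essentially that Propositions \ref{adjunction} and \ref{Matlis dual commutes with F} compose to give the stated Frobenius structure.
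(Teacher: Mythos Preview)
Your proposal is correct and follows exactly the same route as the paper: apply $D_J$ to the given map, precompose with $\delta_M$, and pass through the adjunction of Proposition~\ref{adjunction}. Your additional remarks (that only the existence of $\delta_M$, not its invertibility, is needed, and the explicit formula for $\varphi$) are accurate elaborations but do not change the argument.
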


\begin{proof}
Apply the functor $D_J$ to the given map, obtaining an $R$-linear map $D_J(F^*M) \rightarrow D_J(M)$. Pre-composition with the map $\delta_M$ defined in Proposition \ref{Matlis dual commutes with F} gives an $R$-linear map $F^*D_J(M) \rightarrow D_J(M)$, which corresponds by adjunction (Proposition \ref{adjunction}) to an $R$-linear map $D_J(M) \rightarrow F_*D_J(M)$, the desired Frobenius module structure.
\end{proof}

In particular, if $M$ is an $\F$-module (resp. a generator of an $\F$-finite $\F$-module), then for any injective $R$-module $J$, $D_J(M)$ has a Frobenius module structure obtained by applying Lemma \ref{Frob structure on dual} to the structure morphism $\theta_M$ (resp. the generating morphism $\beta$).

\begin{definition}\label{F-finite}
Let $A$ be a ring of characteristic $p > 0$. We say that $A$ is \emph{$F$-finite}\footnote{Note the distinction between \emph{$F$-finiteness}, a property of a ring, and \emph{$\F$-finiteness}, a property of an $\F$-module.} if $F_*A$ is a finitely generated $A$-module.
\end{definition}

For example, if $k$ is a perfect field of characteristic $p > 0$, the rings $k[x_1, \ldots, x_n]$ and $k[[x_1, \ldots, x_n]]$ are $F$-finite. We recall the well-known facts that if $R$ is an $F$-finite regular Noetherian ring of characteristic $p > 0$, then $R$ is \emph{$F$-split} (meaning that the $R$-module homomorphism $R \rightarrow F_*R$ defined by the Frobenius admits a section) and $F_*R$ is locally free as an $R$-module (because it is finitely generated as well as flat).

\begin{thm}\label{main theorem on E}
Let $R$ be an $F$-finite regular Noetherian ring containing an algebraically closed field $k$ of characteristic $p > 0$, and let $J$ be an injective $R$-module. Denote by $D_J(-)$ the exact functor $\Hom_R(-,J)$ on the category of $R$-modules. Assume that the following conditions are satisfied:
\begin{enumerate}[(i)]
\item $J$, which is an $\F$-module by \cite[Proposition 1.5]{HunekeSharp}, is simple as an $\F$-module;
\item for every finitely generated $R$-module $M'$ equipped with a choice of $R$-module homomorphism $M' \rightarrow F^*M'$, the stable part $D_J(M')_s$ (which is defined by Lemma \ref{Frob structure on dual}) is a finite-dimensional $k$-space, and the Frobenius structure on $D_J(M')$ restricts to a bijection on $D_J(M')_s$.
\end{enumerate} 
Then, for each $\F$-finite $\F$-module $M$, the following numbers are all equal (and, in particular, are all finite):
\begin{enumerate}
\item the $\mathbb{F}_p$-dimension of $\Hom_{R[F]}(M,J) = \Hom_{\F}(M,J)$,
\item the $\mathbb{F}_p$-dimension of $D_J(M)^{\varphi=1}$,
\item the $k$-dimension of $D_J(M)_s$,
\item the $k$-dimension of $D_J(M')_s$, where $M'$ is any $\F$-module generator of $M$,
\item the maximal integer $t$ such that there exists a surjective $\F$-module morphism (equivalently, surjective Frobenius module morphism) $M \rightarrow J^t$.
\end{enumerate}
\end{thm}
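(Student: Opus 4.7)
The plan is to establish the five claimed equalities via three ingredients: Proposition \ref{Frobenius structure on Hom} (for (1) $=$ (2)), the inverse-limit presentation of $D_J(M)$ arising from an $\F$-module generator (for (1) $=$ (3) $=$ (4)), and a Schur-type argument using the simplicity of $J$ (for (5) $=$ (1)). The equality (1) $=$ (2) follows directly from Proposition \ref{Frobenius structure on Hom}, once one verifies by unwinding the construction of $\delta_M$ in Proposition \ref{Matlis dual commutes with F} that the Frobenius module structure on $D_J(M) = \Hom_R(M, J)$ defined via Remark \ref{Remark: Frobenius structure on Hom} (from the $\F$-module structures $\theta_M$ and $\theta_J$) coincides with the structure defined via Lemma \ref{Frob structure on dual} applied to $\theta_M$.

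For (1) $=$ (4), let $M'$ be a generator of $M$ with generating morphism $\beta: M' \to F^*M'$, so $M = \varinjlim F^{*l}M'$. Restriction along the canonical map $\iota: M' \to M$ gives an $\mathbb{F}_p$-linear map $\Hom_{\F}(M, J) \to D_J(M')$. An $\F$-module morphism $M \to J$ is equivalent to a compatible family $(f_l: F^{*l}M' \to J)$ satisfying the $\F$-module identity; by the colimit presentation of $M$, this reduces to the single condition that $f_0$ be fixed under the Frobenius action $\varphi$ on $D_J(M')$ defined via Lemma \ref{Frob structure on dual} applied to $\beta$. Hence restriction identifies $\Hom_{\F}(M, J)$ with $D_J(M')^{\varphi = 1}$, and assumption (ii) together with Proposition \ref{fixed versus stable part} then yield $\dim_{\mathbb{F}_p} D_J(M')^{\varphi=1} = \dim_k D_J(M')_s$. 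For (3) $=$ (4), the identity $\theta_M \circ \iota = F^*\iota \circ \beta$ shows that $D_J(\iota): D_J(M) \to D_J(M')$ is Frobenius-equivariant; identifying $D_J(M) \cong \varprojlim_l F^{*l} D_J(M')$ via iterated application of Proposition \ref{Matlis dual commutes with F}, I construct a $k$-linear section of the restriction on stable parts by sending $x \in D_J(M')_s$ to the compatible family in the inverse limit whose $l$-th coordinate is built from $(\varphi|_{D_J(M')_s})^{-l}(x)$, well-defined by the Frobenius-bijectivity in (ii). A direct verification shows this section lands in $D_J(M)_s$ and that every stable element arises this way, giving the desired $k$-isomorphism.

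For (5) $=$ (1), simplicity of $J$ gives $\Hom_{\F}(J, J) = \mathbb{F}_p$: an $\F$-endomorphism corresponds by Matlis duality to $c \in \hat{R}_{\fm}$, and the compatibility $\theta_J \circ c = F^*c \circ \theta_J$ translates (since $F^*c$ acts on $F^*J$ as multiplication by $c^p$) to $c = c^p$, whose solutions in a complete Noetherian local ring with algebraically closed residue field are exactly $\mathbb{F}_p$ by Hensel's lemma. Consequently $\Hom_{\F}(J^l, J) = \mathbb{F}_p^l$, spanned by the projections. Let $l = \dim_{\mathbb{F}_p} \Hom_{\F}(M, J)$ and let $f_1, \ldots, f_l$ be a basis; the combined map $f: M \to J^l$ must be surjective, for otherwise any simple quotient of the finite-length $\F$-module cokernel is isomorphic to $J$ (all composition factors of $J^l$ being $J$), yielding a nonzero $h \in \Hom_{\F}(J^l, J)$ with $h \circ f = 0$, which produces an $\mathbb{F}_p$-linear dependence among the $f_i$. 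Conversely, the components of any surjection $M \to J^t$ must be $\mathbb{F}_p$-linearly independent (a dependence would force the image to lie in a proper $\F$-submodule of $J^t$), so $t \leq l$.

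The main obstacle is step (3) $=$ (4): establishing that the Frobenius-equivariant restriction map $D_J(M)_s \to D_J(M')_s$ is a bijection (in particular, that $D_J(M)_s$ is finite-dimensional over $k$). The subtlety is that elements of $D_J(M)_s$ are compatible families in the inverse limit $\varprojlim_l F^{*l} D_J(M')$ subject to a stability condition coming from an implicit Frobenius module structure on the inverse limit, and showing that such an element is uniquely determined by its projection to the $l = 0$ component requires using the Frobenius-bijectivity on the finite-dimensional $k$-space $D_J(M')_s$ in an essential way.
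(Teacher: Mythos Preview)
Your proof is essentially correct and organized similarly to the paper's, with a couple of tactical differences and one gap in generality.

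For (3) $=$ (4), the paper sidesteps your hands-on section construction by invoking the \emph{leveling functor} of Hartshorne--Speiser: after identifying $D_J(M) \cong \varprojlim_l (F^*)^l D_J(M')$ via Proposition~\ref{Matlis dual commutes with F}, this inverse limit is exactly $G(D_J(M'))$ in their notation, and \cite[Proposition~1.2(b)]{HartshorneSpeiser} computes its stable part as $\varprojlim_l (k \otimes_{F_k^l} D_J(M')_s)$, which collapses to $D_J(M')_s$ by the bijectivity in hypothesis~(ii). What you flag as the ``main obstacle'' is precisely the content of that proposition, so citing it is cleaner than reproving it. Your direct identification $\Hom_{\F}(M,J) \cong D_J(M')^{\varphi=1}$, giving (1) $=$ (4) without passing through (3), is a nice shortcut the paper does not take; the paper instead obtains (2) $=$ (3) from Proposition~\ref{fixed versus stable part} only after (3) is known to be finite via (3) $=$ (4).

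For (1) $=$ (5), the paper uses a Chinese-remainder-style induction on the kernels $M_i = \ker \varphi_i$, showing $M/(\cap_{i \le j} M_i) \cong J^j$ step by step, rather than your cokernel/Schur argument; both are valid. However, your computation $\Hom_{\F}(J,J) = \mathbb{F}_p$ via Matlis duality over $\widehat{R}_{\fm}$ and Hensel's lemma presupposes $J = E_R(R/\fm)$, which is the setting of Corollary~\ref{special case of main theorem} but not of Theorem~\ref{main theorem on E} as stated. To be fair, the paper's own argument is not entirely innocent here: the assertion that factoring $\varphi_{j+1}$ through $M/(\cap_{i \le j} M_i) \cong J^j$ forces $\varphi_{j+1}$ into the $\mathbb{F}_p$-span of $\varphi_1, \ldots, \varphi_j$ likewise implicitly uses $\Hom_{\F}(J,J) = \mathbb{F}_p$.
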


\begin{proof}
Let $M$ be an $\F$-finite $\F$-module over $R$, and let $\beta: M' \rightarrow F^*M'$ be an $\F$-module generating morphism for $M$. We have already proved the equality of (1) and (2) above, in Proposition \ref{Frobenius structure on Hom}. Since $M'$ is a finitely generated $R$-module, our condition (ii) implies that $D_J(M')_s$ is a finite-dimensional $k$-space, that is, that (4) is finite. If we can prove the equality of (3) and (4) and hence the finiteness of (3), then the equality of (2) and (3) will follow from Proposition \ref{fixed versus stable part}. Therefore we need only prove the equality of (1) and (5) as well as the equality of (3) and (4).

We begin with the equality of (1) and (5). Suppose first that there exists an $\F$-module surjection $M \rightarrow J^t$. Post-composing it with each of the $t$ coordinate projections $J^t \rightarrow J$ produces $t$ $\mathbb{F}_p$-linearly independent $\F$-module morphisms $M \rightarrow J$. 

Conversely, assume there are $t$ such $\mathbb{F}_p$-linearly independent $\F$-module morphisms $\varphi_1, \ldots, \varphi_t: M \rightarrow J$; we wish to construct an $\F$-module surjection $M \rightarrow J^t$, or equivalently, an $\F$-submodule $N \subseteq M$ such that $M/N$ is isomorphic to $J^t$. Since $J$ is a simple $\F$-module by our condition (i), each $\varphi_i$ must itself be surjective, since its image is a non-zero $\F$-submodule of $J$. Set $M_i = \ker(\varphi_i)$, an $\F$-submodule of $M$, for all $i$. Since $M/M_i \cong J$, we must have $M_i + M_j = M$ whenever $i \neq j$. We claim that $M/(\cap_{i=1}^t M_i) \cong J^t$, which will complete the proof (take $N = \cap_{i=1}^t M_i$); we do this by showing, by induction on $j$, that $M/(\cap_{i=1}^j M_i) \cong J^j$ for $1 \leq j \leq t$. This assertion is obvious for $j=1$. Now suppose that for some $1 \leq j < t$ we know that $M/(\cap_{i=1}^j M_i) \cong J^j$ as $\F$-modules. We cannot have $\cap_{i=1}^j M_i \subseteq M_{j+1}$, since otherwise $\varphi_{j+1}$ would factor through $M \rightarrow M/(\cap_{i=1}^j M_i)$ and hence would lie in the $\mathbb{F}_p$-span of $\varphi_1, \ldots, \varphi_j$, a contradiction. Therefore $\cap_{i=1}^j M_i \nsubseteq M_{j+1}$, and so $\cap_{i=1}^j M_i + M_{j+1} = M$ by the simplicity. But then the short exact sequence
\[
0 \rightarrow M/(\cap_{i=1}^{j+1} M_i) \rightarrow M/(\cap_{i=1}^j M_i) \oplus M/M_{j+1} \rightarrow M/(M_{j+1} + \cap_{i=1}^j M_i) \rightarrow 0
\] 
of $\F$-modules implies that $M/(\cap_{i=1}^{j+1} M_i) \cong M/(\cap_{i=1}^j M_i) \oplus M/M_{j+1} \cong J^j \oplus J \cong J^{j+1}$ by the induction hypothesis, as desired.

Finally, we prove the equality of (3) and (4). By definition,
\[
M = \varinjlim(M' \xrightarrow{\beta} F^*M' \xrightarrow{F^*\beta} (F^*)^2 M' \rightarrow \cdots)
\]
and the $\F$-module structure on $M$ is induced by $\beta$ and its $F^*$-iterates. Applying $D_J(-)$, we find
\[
D_J(M) \cong \varprojlim(\cdots \rightarrow D_J((F^*)^2 M') \xrightarrow{D_J(F^*\beta)} D_J(F^* M') \xrightarrow{D(\beta)} D_J(M')).
\]
Since $R$ is $F$-finite, not only $M'$ but also $(F^*)^l M'$ for all $l \geq 0$ are finitely generated $R$-modules. Therefore, using Proposition \ref{Matlis dual commutes with F} to identify $D((F^*)^l M')$ with $(F^*)^lD_J(M')$ for all $l \geq 0$, we can rewrite the limit as
\[
D_J(M) \cong \varprojlim(\cdots \rightarrow (F^*)^2 D_J(M') \rightarrow F^*D_J(M') \rightarrow D_J(M')).
\]
Since $(F^*)^l$ and $(F^l)^*$ are isomorphic functors, this is exactly the \emph{leveling functor} of \cite[p. 47]{HartshorneSpeiser}. (In the notation of \cite{HartshorneSpeiser}, we have $D_J(M) = G(D_J(M'))$.) It follows from the proof of \cite[Proposition 1.2(b)]{HartshorneSpeiser} (see Remark \ref{HS proof} below) that 
\[
D_J(M)_s \cong \varprojlim(\cdots \rightarrow k \otimes_{F_k^2} D_J(M')_s \rightarrow k \otimes_{F_k} D_J(M')_s \rightarrow D_J(M')_s),
\]
where $F_k: k \rightarrow k$ is the Frobenius endomorphism of $k$, and the maps 
\[
k \otimes_{F_k^{l+1}} D_J(M')_s \rightarrow k \otimes_{F_k^l} D_J(M')_s
\] 
are given by the identity on the first tensor factor and the restriction of the map $D_J(M') \rightarrow D_J(M')$ defining the Frobenius module structure on $D_J(M')$ in the second. But by our condition (ii), this last map restricts to a bijection from $D_J(M')_s$ to itself. That is, the displayed limit can be identified with $D_J(M')_s$, so that $D_J(M)_s \cong D_J(M')_s$ as $k$-spaces, completing the proof of the equality of (3) and (4) and therefore the proof of the theorem.
\end{proof}

\begin{remark}\label{HS proof}
In the proof of Theorem \ref{main theorem on E} above, we appealed to \cite[Proposition 1.2(b)]{HartshorneSpeiser}. This proposition is stated in \cite{HartshorneSpeiser} only for a ring $R$ of characteristic $p > 0$ such that $F_*R$ is a \emph{free} $R$-module, a hypothesis that is stronger than ours. However, examining the proof of \cite[Proposition 1.2(b)]{HartshorneSpeiser}, it is clear that this hypothesis is only used in the form of the following consequence: if $M$ is an $R$-module and $m, m' \in M$ are such that $1 \otimes m = 1 \otimes m'$ in $F^*M = F_*R \otimes_{R} M$, then $m = m'$. But since we assumed in Theorem \ref{main theorem on E} that $R$ is regular and $F$-finite, it is also $F$-split, from which the previous statement is immediate.
\end{remark}

The following corollary of Theorem \ref{main theorem on E}, which identifies a class of injective modules for which the hypotheses of the theorem are satisfied, is the main result of this paper.

\begin{cor}\label{special case of main theorem}
Let $(R,\fm)$ be a Noetherian regular local ring containing its algebraically closed residue field $k$ of characteristic $p > 0$, and let $M$ be an $\F$-finite $\F$-module over $R$. Let $E = E_R(R/\fm)$ be the $R$-injective hull of $R/\fm$, and denote by $D(-)$ the exact functor $\Hom_R(-,E)$ on the category of $R$-modules. Then, for each $\F$-finite $\F$-module $M$, the following numbers are all equal (and, in particular, are all finite):
\begin{enumerate}
\item the $\mathbb{F}_p$-dimension of $\Hom_{R[F]}(M,E) = \Hom_{\F}(M,E)$,
\item the $\mathbb{F}_p$-dimension of $D(M)^{\varphi=1}$,
\item the $k$-dimension of $D(M)_s$,
\item the $k$-dimension of $D(M')_s$, where $M'$ is any $\F$-module generator of $M$,
\item the maximal integer $t$ such that there exists a surjective $\F$-module morphism (equivalently, surjective Frobenius module morphism) $M \rightarrow E^t$.
\end{enumerate}
\end{cor}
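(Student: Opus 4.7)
The plan is to deduce the corollary directly from Theorem \ref{main theorem on E} by verifying the two hypotheses (i) and (ii) in the special case $J = E = E_R(R/\fm)$. Condition (i) is immediate: Example \ref{R, E, and local coh}(c) already records that $E$ is simple as an $\F$-module (with $R/\fm$ as a generator). So the only real work is to establish condition (ii): for every finitely generated $R$-module $M'$ equipped with an $R$-linear map $M' \to F^*M'$, the stable part $D(M')_s$ is a finite-dimensional $k$-space and the Frobenius on $D(M')$ restricts to a bijection on it.

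The key input for (ii) will be Matlis duality combined with the Hartshorne--Speiser result on Artinian modules (Theorem \ref{Artinian stable part finite}). First I would observe that, since $E$ is $\fm$-power torsion, so is $D(M') = \Hom_R(M',E)$; consequently the $R$-module structure on $D(M')$ extends uniquely to an $\hat{R}_\fm$-module structure, and every $R$-linear endomorphism of $D(M')$ is automatically $\hat{R}_\fm$-linear. Under the classical identification $\Hom_R(M',E) \cong \Hom_{\hat{R}_\fm}(M' \otimes_R \hat{R}_\fm, E)$ and the finite generation of $M' \otimes_R \hat{R}_\fm$ over the complete local ring $\hat{R}_\fm$, Matlis duality over $\hat{R}_\fm$ shows that $D(M')$ is an Artinian $\hat{R}_\fm$-module.

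Next I would check that the Frobenius module structure on $D(M')$ constructed in Lemma \ref{Frob structure on dual} is genuinely a Frobenius module structure over $\hat{R}_\fm$ in the sense of Definition \ref{Frobenius module}. The Frobenius on $R$ extends to a Frobenius on $\hat{R}_\fm$, and by the $\fm$-torsion argument above the induced map $D(M') \to F_* D(M')$ is $\hat{R}_\fm$-linear (where $F_*$ is now formed with respect to $F_{\hat{R}_\fm}$). Since $\hat{R}_\fm$ contains the perfect (indeed algebraically closed) field $k$, Theorem \ref{Artinian stable part finite} applies: $D(M')_s$ is a finite-dimensional $k$-space, and the map induced by $\varphi$ on $D(M')_s$ is bijective. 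This is exactly condition (ii).

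With (i) and (ii) in hand, Theorem \ref{main theorem on E} applied to $J = E$ yields the equality of quantities (1)--(5) in the corollary and their common finiteness. I do not expect a significant obstacle: the only point that requires any care is making sure the Frobenius structure from Lemma \ref{Frob structure on dual} transfers cleanly from $R$ to $\hat{R}_\fm$ so that the local-ring hypothesis of Theorem \ref{Artinian stable part finite} is legitimately satisfied, and this is handled by the $\fm$-torsion observation.
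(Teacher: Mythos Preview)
Your proposal is correct and follows essentially the same approach as the paper: verify conditions (i) and (ii) of Theorem \ref{main theorem on E} for $J=E$, with (i) coming from Example \ref{R, E, and local coh}(c) and (ii) from passing to the $\fm$-adic completion and invoking Theorem \ref{Artinian stable part finite}. The only cosmetic difference is that the paper shows $D(M')$ is Artinian by embedding it in $E^l$ via a surjection $R^l \to M'$, whereas you invoke Matlis duality over $\hat{R}_\fm$; both routes are standard and equivalent in effect.
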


\begin{proof}
By Example \ref{R, E, and local coh}(c), $E$ is a simple $\F$-module, so condition (i) of Theorem \ref{main theorem on E} is satisfied. Now let $M'$ be a finitely generated $R$-module equipped with a choice of $R$-module homomorphism $M' \rightarrow F^*M'$. There exists an $R$-linear surjection $R^l \rightarrow M'$ for some $l \geq 0$; applying the exact functor $D$, we obtain an $R$-linear injection $D(M') \rightarrow D(R^l) = E^l$. Since $E$ (and hence $E^l$) is an Artinian $R$-module supported only at $\fm$, the same is true of $D(M')$, so $D(M')$ has a natural structure as a module over the $\fm$-adic completion $\widehat{R}^{\fm}$ of $R$. In fact, $D(M')$ is a Frobenius module over $\widehat{R}^{\fm}$, with the Frobenius structure given by the same underlying additive map $D(M') \rightarrow D(M')$ defined by Lemma \ref{Frob structure on dual}. The ring $\widehat{R}^{\fm}$ is local (since $\fm$ is maximal) and contains an algebraically closed (hence perfect) field $k$; moreover, $D(M')$ is Artinian as an $\widehat{R}^{\fm}$-module. Therefore, Theorem \ref{Artinian stable part finite} applies. We conclude that $D(M')_s$ is a finite-dimensional $k$-space and the Frobenius structure on $D(M')$ restricts to a bijection on $D(M')_s$, so condition (ii) of Theorem \ref{main theorem on E} is satisfied. The corollary now follows from Theorem \ref{main theorem on E} applied to $J = E$.
\end{proof}


The proof of the equality of (1) and (5) in Corollary \ref{special case of main theorem} works in characteristic zero as well, replacing ``$\F$-finite $\F$-module'' with ``holonomic $\D$-module''. Therefore we obtain an alternate proof of the fact that if $R = k[x_1, \ldots, x_n]$ or $k[[x_1, \ldots, x_n]]$ where $k$ is a field of characteristic zero, and $M$ is a holonomic $\D(R,k)$-module, then $\dim_k \Hom_{\D}(M,E)$ is equal to the maximal integer $t$ for which there exists a $\D$-linear surjection $M \rightarrow E^t$. This statement is part of \cite[Corollary 5.2]{HartshornePolini}. An easier ``dual'' statement is the following \cite[Lemma 2.3]{SwitalaZhangGradedDual}: $\dim_k \Hom_{\D}(R,M)$ is equal to the maximal integer $t$ for which there exists a $\D$-linear injection $R^t \rightarrow M$. We can prove a version of this in the Frobenius module setting, as part of a ``dual'' version of Corollary \ref{special case of main theorem}. Note, however, that Theorem \ref{main theorem on R} has a finite-dimensionality hypothesis whose analogue is not needed (because it is automatically satisfied) in Corollary \ref{special case of main theorem}. 

\begin{thm}\label{main theorem on R}
Let $R$ be a regular Noetherian ring containing an algebraically closed field $k$ of characteristic $p > 0$, and let $M$ be an $\F$-finite $\F$-module over $R$. Assume that $M_s$ is a finite-dimensional $k$-space. Then the following numbers are all equal (and, in particular, are all finite):
\begin{enumerate}
\item the $\mathbb{F}_p$-dimension of $\Hom_{R[F]}(R,M) = \Hom_{\F}(R,M)$,
\item the $\mathbb{F}_p$-dimension of $M^{\varphi=1}$,
\item the $k$-dimension of $M_s$,
\item the maximal integer $t$ such that there exists an injective $\F$-module morphism (equivalently, injective Frobenius module morphism) $R^t \rightarrow M$.
\end{enumerate}
\end{thm}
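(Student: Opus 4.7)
The strategy is to deduce $(1) = (2) = (3) = (4)$ by combining Propositions \ref{fixed versus stable part} and \ref{Frobenius structure on Hom} for three of the equalities and adapting the inductive argument from the $(5) = (1)$ portion of Theorem \ref{main theorem on E}, dualized, for the substantive one.

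Three of the four equalities are essentially in hand. Under the canonical identification $\Hom_R(R, M) \cong M$ of Remark \ref{special case of R}, the Frobenius module structure on $\Hom_R(R, M)$ from Remark \ref{Remark: Frobenius structure on Hom} coincides with $\varphi_M$, so Proposition \ref{Frobenius structure on Hom} yields $\Hom_{\F}(R, M) \cong M^{\varphi=1}$ as $\mathbb{F}_p$-spaces, giving $(1) = (2)$. Proposition \ref{fixed versus stable part} applied directly to $M$ gives $(2) = (3)$, using the hypothesis that $M_s$ is finite-dimensional over $k$. The easy direction $(4) \leq (1)$ follows by composing a hypothetical injection $R^t \hookrightarrow M$ with the $t$ coordinate inclusions $R \hookrightarrow R^t$: the resulting $t$ elements of $\Hom_{\F}(R, M)$ are $\mathbb{F}_p$-linearly independent, since any $\mathbb{F}_p$-linear relation among them would evaluate on $1 \in R$ to a nonzero element of the trivial kernel.

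The substantive content is $(1) \leq (4)$. Given $t$ $\mathbb{F}_p$-linearly independent morphisms $f_1, \ldots, f_t \in \Hom_{\F}(R, M)$, set $m_i = f_i(1) \in M^{\varphi=1}$. Since $R$ is simple as an $\F$-module by Example \ref{R, E, and local coh}(a), each nonzero $f_i$ is injective; I would prove by induction on $j \leq t$ that the map $R^j \to M$, $(r_i) \mapsto \sum r_i m_i$, is injective. If this failed at stage $j$, then $f_j(R) \cap \bigl(f_1(R) + \cdots + f_{j-1}(R)\bigr)$ would be a nonzero $\F$-submodule of $f_j(R) \cong R$ (intersections of $\F$-submodules are $\F$-submodules by exactness of $F^*$), hence by simplicity equal to $f_j(R)$; so $m_j = \sum_{i<j} s_i m_i$ for some $s_i \in R$. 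Applying $\varphi_M$ (which fixes each $m_i$) and subtracting yields $\sum_{i<j}(s_i^p - s_i) m_i = 0$, and the $R$-linear independence of $m_1, \ldots, m_{j-1}$ from the induction hypothesis forces each $s_i \in R^{\varphi=1}$.

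The main obstacle is then to conclude that each $s_i$ lies in $\mathbb{F}_p$, which contradicts the $\mathbb{F}_p$-linear independence of $m_1, \ldots, m_j$ and closes the induction. This is where the hypothesis that $k$ is algebraically closed plays its decisive role: in a domain containing $\mathbb{F}_p$, the equation $s^p = s$ factors as $\prod_{a \in \mathbb{F}_p}(s - a) = 0$, forcing $s \in \mathbb{F}_p$. A connected regular Noetherian ring is normal and hence irreducible, so it is a domain, which covers the standard cases of interest (polynomial and power-series rings over $k$, and more generally any connected such $R$). Once $s_i \in \mathbb{F}_p$ is established, the induction closes to produce the required injection $R^t \hookrightarrow M$.
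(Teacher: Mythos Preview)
Your argument is correct and is exactly the dualization the paper sketches, carried out with more detail than the paper itself supplies. Your caveat about connectedness of $R$ is warranted: the paper's proof rests on the simplicity of $R$ from Example~\ref{R, E, and local coh}(a), and that claim (as well as the equality $(1)=(4)$ itself, e.g.\ for $R=k\times k$ with $M=R$) likewise requires $R$ to have no nontrivial idempotents, so the paper carries the same tacit hypothesis you identified.
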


\begin{proof}
By Remark \ref{special case of R}, we can identify $M$ with $\Hom_R(R,M)$ as Frobenius modules over $R$, and therefore we can identify $M^{\varphi=1}$ with $\Hom_R(R,M)^{\varphi=1}$ as $\mathbb{F}_p$-spaces. Therefore the equality of (1) and (2) follows from Proposition \ref{Frobenius structure on Hom}. We have assumed that $M_s$ is a finite-dimensional $k$-space, so the equality of (2) and (3) follows from Proposition \ref{fixed versus stable part}. Finally, the proof of the equality of (1) and (4) is essentially dual to the proof of the equality of (1) and (5) in Theorem \ref{main theorem on E}, using the fact that $R$ is a simple $\F$-module (Example \ref{R, E, and local coh}(a)). The arguments are similar enough that we omit the details, providing a sketch. An $\F$-module injection $R^t \rightarrow M$ gives rise to $t$ $\mathbb{F}_p$-linearly independent $\F$-module morphisms $R \rightarrow M$ by pre-composition with the coordinate inclusions; conversely, given $t$ distinct isomorphic copies of $R$ (say $M_1, \ldots, M_t$) as $\F$-submodules of $M$, it can be shown (since all $M_i$ are simple $\F$-submodules) that the sum $\sum_{i=1}^j M_i \subseteq M$ is a \emph{direct} sum for $j = 1, \ldots, t$ by induction on $j$, and the case $j=t$ is the desired assertion.
\end{proof}

\begin{question}\label{finite stable part of F-finite}
If $R$ is a regular Noetherian ring containing an algebraically closed field $k$ of characteristic $p > 0$, and $M$ is an $\F$-finite $\F$-module over $R$, is $M_s$ a finite-dimensional $k$-space?
\end{question}

A positive answer to Question \ref{finite stable part of F-finite} would, of course, permit us to remove the finite-dimensionality hypothesis in Theorem \ref{main theorem on R}, since Proposition \ref{fixed versus stable part} would apply.

We end with the following remarks.

\begin{remark}
\label{remark: recover etale cohomology}
Let $X\subset \mathbb{P}^n_k$ be a projective scheme over an algebraically closed field $k$ of characteristic $p$ and let $I\subset R=k[x_0,\dots,x_n]$ be its defining ideal. Applying Corollary \ref{special case of main theorem} to the $\F$-finite $\F$-module $H^j_I(R)$, one has
\begin{align*}
\dim_{\mathbb{F}_p} \Hom_{\F}(H^{n-j}_I(R),E) &= \dim_k \Hom_R(H^{n-j}_I(R),E)_s \\
&= \dim_k \Hom_R(\Ext^{n-j}_R(R/I,R),E)_s \\
&= \dim_k H^{j+1}_{\fm}(R/I)_s \\
&= \dim_k H^j(X, \mathscr{O}_X)_s
\end{align*}
According to \cite[proposition 5.1]{HartshorneSpeiser}, $\dim_k H^j(X, \mathscr{O}_X)_s= \dim_{\mathbb{F}_p} H^j_{\mathrm{\acute{e}t}}(X,\mathbb{F}_p)$. Hence 
\[\dim_{\mathbb{F}_p} \Hom_{\F}(H^{n-j}_I(R),E)= \dim_{\mathbb{F}_p} H^j_{\mathrm{\acute{e}t}}(X,\mathbb{F}_p).\]

Assume that the local cohomology module $H^{n-j}_I(R)$ is supported only at $\fm$ ({\it e.g.} when $X$ is Cohen-Macaulay). Then using the previous paragraph one may recover \cite[Corollary 2.4]{HartshorneSpeiser} as follows:
\[H^{n-j}_I(R) \cong E^{\lambda_j},\ {\rm where}\ \lambda_j=\dim_{\mathbb{F}_p}(H^j_{\mathrm{\acute{e}t}}(X,\mathbb{F}_p)).\]
\end{remark}

\begin{remark}
Let $X,R,I$ be the same as in Remark \ref{remark: recover etale cohomology} and, additionally, assume that $X$ is irreducible and each local ring $\mathscr{O}_{X,x}$ is $F$-rational for each point $x\in X$. Our Remark \ref{remark: recover etale cohomology} implies that $H^{n-\dim(X)}_I(R)$ admits a $\F$-module quotient that is isomorphic to $E^{\lambda_d}$ where $\lambda_d=\dim_{\mathbb{F}_p}(H^{\dim(X)}_{\mathrm{\acute{e}t}}(X,\mathbb{F}_p))$. This does not fully recover the prime-characteristic analogue of \cite[Theorem 6.4]{HartshornePolini} in the case when $i=n-\dim(X)$; the length differs by one from the desired analogous result. The reason is that the simple $\F$-submodule of $H^{n-\dim(X)}_I(R)$ does not admit any non-zero $\F$-module morphism to $E$. However, it follows directly from \cite[Theorem 4.3]{KatzmanMaSmirnovZhang} that $H^{n-\dim(X)}_I(R)$ admits a simple $\F$-submodule $H_0$ such that $H^{n-\dim(X)}_I(R)/H_0\cong E^{\lambda_d}$. 
\end{remark}


\bibliographystyle{plain}

\bibliography{masterbib}

\end{document}